\newcommand{\citeyear}{\cite}
\def \STpn {\mathbb{R}_*^{n\times p}}
\def \rrnp {\rr^{n\times p}}
\def \colsymb {\mathrm{col}}
\def \col {\colsymb}
\newcommand{\lift}[2][]{{\overline{#2}_{#1}}}
\def \nn {\mathbb{N}}
\def \rr {\mathbb{R}}
\def \calE {{\cal{E}}}
\def \calH {{\cal{H}}}
\def \calM {{\cal{M}}}
\def \calS {{\cal{S}}}
\def \calV {{\cal{V}}}
\def \setsym {{\cal S}_\mathrm{sym}}
\def \setskew {{\cal S}_\mathrm{skew}}
\def \sym {\mathrm{sym}}
\def \skew {\mathrm{skew}}
\def \trace {\mathrm{tr}}
\def \D {\mathrm{D}}
\def \d {\mathrm{d}}
\def \J {\mathrm{J}}  
\def \O {\mathrm{O}}  
\newtheorem{thrm}{Theorem}[section]
\newtheorem{prpstn}[thrm]{Proposition}
\def \Pp {P^\mathrm{p}}
\def \Pa {P^\mathrm{a}}
\def \Ps {P^\mathrm{s}}
\def \Ph {P^\mathrm{h}}
\def \im {\mathrm{im}}
\def \calJ {{\cal{J}}}
\newenvironment{proof}{{\it Proof.}}{\hspace{\stretch{1}} $\square$}
\newcommand{\pacomm}[1]{}
\newcommand{\patodo}[1]{}
\begin{document}

\title{A geometric Newton method for Oja's vector field\footnotemark[1]}
\author{P.-A. Absil\footnotemark[2] \and 
M. Ishteva\footnotemark[3] \and L. De Lathauwer\footnotemark[4]
\footnotemark[3] \and S. Van Huffel\footnotemark[3]
}
\date{\today}

\maketitle

\renewcommand{\thefootnote}{\fnsymbol{footnote}} \footnotetext[1]{This
paper presents research results of the Belgian Network DYSCO
(Dynamical Systems, Control, and Optimization), funded by the
Interuniversity Attraction Poles Programme, initiated by the Belgian
State, Science Policy Office. The scientific responsibility rests with
its authors. Research supported by: (1) Research Council K.U.Leuven:
GOA-Ambiorics, CoE EF/05/006 Optimization in Engineering (OPTEC),
CIF1, (2) F.W.O.: (a) project G.0321.06, (b) Research Communities
ICCoS, ANMMM and MLDM, (3) the Belgian Federal Science Policy
Office: IUAP P6/04 (DYSCO, ``Dynamical systems, control and
optimization'', 2007--2011), (4) EU: ERNSI. M. Ishteva is supported by a K.U.Leuven doctoral scholarship (OE/06/25, OE/07/17), L. De Lathauwer
is supported by ``Impulsfinanciering Campus Kortrijk
(2007-2012)(CIF1)''.}   
\footnotetext[2]{D\'epartement
d'ing\'enierie math\'ematique, Universit\'e catholique de Louvain,
Belgium (www.inma.ucl.ac.be/$\sim$absil).}
\footnotetext[3]{Research Division SCD of the Department of Electrical Engineering
  (ESAT) of the Katholieke Universiteit Leuven, Kasteelpark Arenberg
  10, B-3001 Leuven, Belgium.
}
\footnotetext[4]{Subfaculty Science and
  Technology of the Katholieke Universiteit Leuven Campus Kortrijk,
  E. Sabbelaan 53, 8500 Kortrijk, Belgium.}
\renewcommand{\thefootnote}{\arabic{footnote}}

\thispagestyle{fancy}
\rhead{Tech.\ Report UCL-INMA-2008.013}
\lhead{07 Apr 2008}

\begin{center}
{\bf Abstract}\\[.5em]

\begin{minipage}{12cm}
\noindent
Newton's method for solving the matrix equation $F(X)\equiv
AX-XX^TAX=0$ runs up against the fact that its zeros are not
isolated. This is due to a symmetry of $F$ by the action of the
orthogonal group. We show how differential-geometric techniques can be
exploited to remove this symmetry and obtain a ``geometric'' Newton
algorithm that finds the zeros of $F$. The geometric Newton method
does not suffer from the degeneracy issue that stands in the way of
the original Newton method.
\end{minipage}

\end{center}

\noindent
{\bf Key words.} Oja's learning equation, Oja's flow,
differential-geometric optimization, Riemannian optimization, quotient
manifold, neural networks

\section{Introduction}
\label{sec:intro}

Let $A$ be a symmetric positive-definite $n\times n$ matrix and let
$p$ be a positive integer smaller than $n$. 

Oja's flow~\cite{Oja82,Oja89}, in its averaged version
\begin{subequations}  \label{eq:Oja}
\begin{equation}  \label{eq:Oja-dyn}
\frac{\d}{\d t} X(t) = F(X(t)),
\end{equation}
where $F$ denotes the vector field
\begin{equation}  \label{eq:Oja-F}
F: \rr^{n\times p} \to \rr^{n\times p}: X \mapsto AX - XX^TAX,
\end{equation}
\end{subequations}
is well known for its principal component analysis properties. For all
initial conditions $X_0$, the ordinary differential
equation~\eqref{eq:Oja} has a unique solution curve $t\mapsto X(t)$ on
the interval $[0,\infty)$~\cite[Th.~2.1]{YHM94}, the limit $X(\infty)
= \lim_{t\to\infty}X(t)$ exists, the convergence to $X(\infty)$ is
exponential, and $X(\infty)$ is a zero of Oja's vector field
$F$~\eqref{eq:Oja-F}~\cite[Th.~3.1]{YHM94}.

Observe that the zeros
of $F$ are the solutions $X\in\rr^{n\times p}$
of the matrix equation
\[
AX = X X^TAX,
\]
which implies that the columns of $AX$ are linear combinations of the
columns of $X$. Letting
\[
\col(Y) = \{Y\alpha: \alpha\in\rr^p\}
\]
denote the column space of $Y\in\rr^{n\times p}$, it follows that all
zeros $X$ of~\eqref{eq:Oja-F} satisfy $A\,\col(X)\subseteq\col(X)$,
i.e., $\col(X)$ is an invariant subspace of $A$. Moreover, $X(\infty)$
is an orthonormal matrix ($X^T(\infty)X(\infty)=I_p$), thus of full
rank, whenever the initial condition $X(0)$ is of full
rank~\cite[Prop.~3.1]{YHM94}. Assuming that $X(0)$ has full rank, it
holds that $\col(X(\infty))$ is the $p$-dimensional principal subspace
of $A$ if and only if $\col(X(0))$ does not contain any direction
orthogonal to that subspace~\cite[Th.~5.1]{YHM94}. The set of all
initial conditions that do not satisfy this condition is a negligible
set, i.e., Oja's flow asymptotically computes the
  $p$-dimensional principal subspace of $A$ for almost all initial
  conditions. We also point out that Oja's flow induces a subspace
  flow, called the power flow~\cite{AbsMahSep2007-32}.

Because of these remarkable properties, Oja's flow has been and
remains the subject of much attention, in its form~\eqref{eq:Oja} as
well as in several modified versions; see, for
example,~\cite{YHM94,Yan1998,DMV99,ManHelMar2005,JanOga2006} .
However, turning Oja's flow into a principal component algorithm
implementable in a digital computer requires to discretize the flow in
such a way that its good convergence properties are
preserved. \patodo{Refer to Berzal and Zufira? They follow a
  stochastic approach...} Since the solutions $X(t)$ converge
exponentially to their limit point $X(\infty)$, it follows that the
sequence of equally spaced discrete-time samples $(X(kT))_{k\in\nn}$
converges only $Q$-linearly~\cite{OrtRhe1970} to $X(\infty)$. Therefore, numerical
integration methods that try to compute accurately the solution
trajectories of Oja's flow are not expected to converge faster than
linearly.

Nevertheless, if the ultimate goal is to compute the principal
eigenspace of $A$, then it is tempting to try to accelerate the
convergence when the iterates get close to the limit point
$X(\infty)$, using techniques akin to those proposed
in~\cite{Hig1999,FowKel2005,KelQiLia+2006,LuoKelLia+2006}, in order to
obtain a superlinear algorithm. To this end, it is interesting to
investigate how superlinearly convergent methods perform for finding a
zero of Oja's vector field~\eqref{eq:Oja-F}.
Since Newton's method can be thought of as the prototype superlinear
algorithm to which all other superlinear algorithms relate, we propose
to investigate the behavior of Newton's method applied to the problem
of finding a zero of Oja's vector field~\eqref{eq:Oja-F}.

A crucial hypothesis in the classical local convergence analysis of
Newton's method (see, e.g.,~\cite{DS83}) is that the targeted zero is
nondegenerate. As it turns out, the zeros of Oja's vector field $F$~\eqref{eq:Oja-F}
are never isolated, because $F$ displays a property of symmetry under
the action of the orthogonal group $\O_p$ on the set
$\rrnp$. Therefore, the classical superlinear convergence result of
Newton's method is void in the case of $F$, and indeed our numerical
experiments show that Newton's method on $\rrnp$ for $F$ performs
poorly (see Figure~\ref{fig:plainN}).

In this paper, we propose a remedy to this difficulty, that consists
in ``quotienting out'' the symmetry of $F$. Conceptually speaking,
instead of performing a Newton iteration in $\rrnp$, we perform a
Newton iteration on a quotient space, namely $\STpn/\O_p$ (defined in
Section~\ref{sec:quot}). We exploit the
Riemannian quotient manifold structure of $\STpn/\O_p$ to formulate a
Newton method on this set, following the framework developed
in~\cite{AMS2008}. 
The resulting Newton equation is a linear matrix equation that can be
solved by various numerical approaches. It follows from the theory of
Newton method on manifolds (see, e.g.,~\cite{ADM2002,AMS2008}), and from a careful analysis of the zeros
of the vector field, that the obtained algorithm converges locally
superlinearly to the set of orthonormal bases of invariant
subspaces of $A$. Our numerical experiments show that the method
behaves as expected.

\section{Plain Newton method for Oja's vector field}
\label{sec:plain}

We assume throughout the paper that $A$ is a real symmetric positive-definite
$n\times n$ matrix. For simplicity, we also assume that the
eigenvalues of $A$ satisfy 
\begin{equation}  \label{eq:simple-eig}
\lambda_1>\cdots>\lambda_n,
\end{equation}
i.e., all the eigenvalues of $A$ are simple. Hence the $p$-dimensional
invariant subspaces of $A$ are isolated.

Newton's method in $\rrnp$ for Oja's vector field $F$~\eqref{eq:Oja-F}
consists of iterating the map $X\mapsto X_+$ defined by solving
\begin{gather}
\D F(X)[Z] \equiv AZ - ZX^TAX - XZ^TAX - XX^TAZ = -F(X)
\\
X_+ = X+Z.
\end{gather}

The following proposition is a well-known characterization of the
zeros of $F$.
\begin{prpstn}[zeros of $F$~\eqref{eq:Oja-F}]
Let $X\in\rr^{n\times p}$ be of full rank. Then the following two
conditions are equivalent:
\begin{enumerate}
\item $F(X) = 0$, i.e., 
\begin{equation}  \label{eq:AX=}
AX = XX^TAX.
\end{equation}
\item $\col(X)$ is an invariant subspace of $A$ and $X$ is orthonormal
  ($X^TX=I$).
\end{enumerate}
\end{prpstn}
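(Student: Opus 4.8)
The plan is to prove the equivalence in both directions, with the implication $(2)\Rightarrow(1)$ being the easy one and $(1)\Rightarrow(2)$ requiring a little more care. First I would handle $(2)\Rightarrow(1)$: if $X$ is orthonormal, then $X^TX=I$, so $XX^TAX = X(X^TAX)$; and if $\col(X)$ is $A$-invariant then $A\col(X)\subseteq\col(X)$ means there is a $p\times p$ matrix $M$ with $AX = XM$. Left-multiplying by $X^T$ gives $M = X^TAX$, hence $AX = X(X^TAX) = XX^TAX$, which is exactly~\eqref{eq:AX=}. This direction uses only elementary linear algebra and does not even need $A$ symmetric or full rank of $X$ beyond what makes the identities meaningful.

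For $(1)\Rightarrow(2)$, suppose $AX = XX^TAX$ with $X$ of full rank $p$. The first thing to extract is that $\col(X)$ is $A$-invariant: for any $\alpha\in\rr^p$, $A(X\alpha) = X(X^TAX\alpha)\in\col(X)$, so $A\col(X)\subseteq\col(X)$. The harder part is to deduce orthonormality. The idea is to set $S = X^TX$, a symmetric positive-definite $p\times p$ matrix (positive-definite because $X$ has full rank), and show $S=I$. Left-multiply the equation $AX = XX^TAX$ by $X^T$ to get $X^TAX = S X^TAX$, i.e. $(I-S)X^TAX = 0$. Since $A$ is positive-definite and $X$ has full rank, $X^TAX$ is positive-definite, hence invertible; therefore $I - S = 0$, that is, $X^TX = I$. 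This is the crux of the argument, and the only place where positive-definiteness of $A$ is genuinely used.

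The step I expect to be the main (though modest) obstacle is making sure the invertibility claims are airtight: one must note that $X$ full rank implies $X^TX$ is invertible, and that $A\succ 0$ together with $X$ full rank (so $X$ injective as a map $\rr^p\to\rr^n$) implies $X^TAX\succ 0$, hence invertible — the quadratic form $\alpha\mapsto \alpha^T X^TAX\alpha = (X\alpha)^TA(X\alpha)$ vanishes only when $X\alpha=0$, i.e. $\alpha=0$. With $X^TAX$ invertible the conclusion $S=I$ is immediate from $(I-S)X^TAX=0$. Once orthonormality is in hand, $A$-invariance of $\col(X)$ was already shown, completing $(1)\Rightarrow(2)$ and hence the proposition. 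I would present it compactly, since every individual computation is routine; the content is just in choosing to multiply by $X^T$ and invoke the right invertibility facts.
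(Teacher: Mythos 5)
Your proof is correct and follows essentially the same route as the paper: both directions hinge on left-multiplying by $X^T$ and on the invertibility of $X^TAX$, which you justify carefully (the paper invokes the same fact a bit more tersely). The only cosmetic difference is your introduction of $S=X^TX$, which makes the conclusion $(I-S)X^TAX=0\Rightarrow S=I$ a little more explicit than the paper's one-line remark.
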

\begin{proof}
  $1\Rightarrow 2$. We have that $AX = X(X^TAX)$, thus $A\,\col(X)
  \subseteq \col(X)$. (More precisely, since $X$ has full rank and $A$
  is positive-definite, it
  follows that $X^TAX$ is invertible and thus $A\,\col(X) =
  \col(X)$.) Moreover, multiplying~\eqref{eq:AX=} by $X^T$ on the left
  yields $X^TAX = X^TXX^TAX$, which implies that $X^TX=I$ since
  $X^TAX$ is invertible.

$2\Rightarrow 1$. Since $\col(X)$ is an invariant subspace of $A$,
there is a matrix $M$ such that $AX = XM$. Multiply this equation on
the left by $X^T$ to obtain that $M = X^TAX$ and thus~\eqref{eq:AX=}. 
\end{proof}

Hence, the set of all full-rank zeros of $F$ is the finite union of the
compact sets
\begin{equation}  \label{eq:Si}
\calS_i := \{X\in\rr^{n\times p}: \col(X) = \calE_i, X^TX = I\}
\end{equation}
where $\calE_1,\ldots,\calE_N$ are the $p$-dimensional invariant
subspaces of $A$. (Note that $N$ is finite; it is equal to $n\choose
p$.) 
It is readily checked that $\calS_i = V_i \O_p$, where 
\[
\O_p := \{Q\in\rr^{p\times p}:Q^TQ=I\}
\]
is the orthogonal group of order $p$ and $V_i$ is an element of
$\calS_i$.  \pacomm{(One can conclude that $\calS_i$ is a submanifold
  of $\rrnp$ of the same dimension ($p(p-1)/2$) as $\O_p$.)}  It
follows that the zeros of $F$ are not isolated. Hence the Jacobian of
$F$ at the zeros of $F$ is singular (i.e., the zeros are
\emph{degenerate}), and consequently, the classical result
(see~\cite[Th.~5.2.1]{DS83}) of local superlinear convergence of
Newton's iteration to the nondegenerate zeros of $F$ is void. This
does not imply that Newton's method will fail, but there is a
suspicion that it will behave poorly, and indeed, in
Section~\ref{sec:exper} (see Figure~\ref{fig:plainN}), we report on
numerical experiments showing that this is the case. 

\section{Newton's method on $\STpn/\O_p$}
\label{sec:quot}

The degeneracy of the zeros of $F$ is due to the following fundamental
symmetry property:
\begin{equation}  \label{eq:FXQ}
F(XQ) = F(X) Q, \ \textrm{for all $Q\in \O_p$}.
\end{equation}
In this section, we propose a \emph{geometric} Newton method that
performs well with functions $F$ that satisfy~\eqref{eq:FXQ}. This
geometric Newton method evolves on the quotient space $\STpn/\O_p$,
where this symmetry is removed. Then, in Section~\ref{sec:N-Oja}, we
will return
to the specific case where $F$ is Oja's vector field~\eqref{eq:Oja-F}
and obtain a concrete numerical algorithm.

The general idea for the geometric Newton method is first to define a
vector field $\xi$ on the manifold $\STpn/\O_p$, whose zeros relate to
those of $F$. The vector field $\xi$ is specified in terms of its
so-called \emph{horizontal lift} in $\STpn$. This formulation requires
us to introduce some concepts (vertical and horizontal spaces)
borrowed from the theory of fiber bundles~\cite{KN63}, or more
specifically from the theory of Riemannian submersions~\cite{ONe1983}.
\pacomm{(The following is useless here, because the Jacobian in the
  total space is too complicated. In particular, the metric
  $\overline{g}$ is not horizontally invariant.) This theory,
  moreover, offers a convening formula for the geometric Jacobian of
  $\xi$ as the Jacobian of the horizontal lift followed by an
  orthogonal projection onto the horizontal space.)}  All the
differential-geometric concepts used in this section are explained
in~\cite{AMS2008}, or in~\cite{Lee2003} for what concerns Lie groups. 
\patodo{Announce that
  details will be elsewhere?}

The following notation will come useful. Let 
\begin{equation}  \label{eq:STpn}
\STpn = \{X\in\rrnp: \det(X^TX)\neq 0\}
\end{equation}
denote the set of all full-rank $n\times p$ matrices. Let
\begin{equation}  \label{eq:sym}
\sym(B) = \frac{1}{2}(B+B^T)
\end{equation}
and
\begin{equation}   \label{eq:skew}
\skew(B) = \frac{1}{2}(B-B^T)
\end{equation}
denote the terms of the decomposition of a square matrix $B$ into a
symmetric term and a skew-symmetric term. For $X\in\STpn$,
define 
\begin{gather}
\Pp_X: \rrnp\to\rrnp: Z\mapsto \Pp_X(Z) = (I-X(X^TX)^{-1}X^T)Z  \label{eq:Pp}
\\
\Ps_X: \rrnp\to\rrnp: Z\mapsto \Ps_X(Z) = 
X\sym((X^TX)^{-1}X^TZ)  \label{eq:Ps}
\\ 
\Pa_X: \rrnp\to\rrnp: Z\mapsto \Pa_X(Z) = 
X\skew((X^TX)^{-1}X^TZ) \label{eq:Pa}
\\ \Ph_X = \Pp_X + \Ps_X: Z\mapsto Z-X\skew((X^TX)^{-1}X^TZ).  \label{eq:Ph}
\end{gather}
We have $Z=\Pp_X(Z) + \Ps_X(Z) + \Pa_X(Z)$ for all
$Z\in\rrnp$. Observe that $\im(\Pp_X) = \{Z\in\rrnp:X^TZ=0\}$,
$\im(\Ps_X) = X\setsym(p)$, $\im(\Pa_X)=X\setskew(p)$, where
\[
\setsym(p) = \{S\in\rr^{p\times p}:S^T=S\}
\]
denotes the set of all symmetric matrices of order $p$ and
\[
\setskew(p) = \{\Omega\in\rr^{p\times p}: \Omega^T = -\Omega\}
\]
is the set of all \emph{skew-symmetric} (or \emph{antisymmetric})
matrices of order $p$. (The
letter ``p'' stands for ``perpendicular'', ``s'' for symmetric, ``a''
for antisymmetric, and the notation $\Ph$ will make sense in a moment.)

In $\STpn$, we
define an equivalence relation ``$\sim$'' where $X\sim Y$ if and only if there
exists a $Q\in \O_p$ such that $Y=XQ$. The equivalence class of
$X\in\STpn$ is thus
\begin{equation}  \label{eq:[X]}
[X] = X\, \O_p = \{XQ:Q\in \O_p\}.
\end{equation}
We let 
\[
\STpn/\O_p = \STpn / \sim
\]
denote the quotient of $\STpn$ by this equivalence
relation, i.e., the elements of the set $\STpn/\O_p$ are the
equivalence classes of the form~\eqref{eq:[X]}, $X\in\STpn$. 
We let 
\begin{equation}  \label{eq:pi}
\pi: \STpn \to \STpn/\O_p
\end{equation}
denote the \emph{quotient map} that sends $X\in\STpn$ to its
equivalence class $[X]$ viewed as an element of $\STpn/\O_p$. The set
$\STpn$ is termed the \emph{total space} of the quotient
$\STpn/\O_p$. Note that a point $\pi(X)\in\STpn/\O_p$ can be
numerically represented by any element of its equivalence class
$[X]=\pi^{-1}(\pi(X))$.

Since
$\STpn$ is an open subset of $\rrnp$, it is naturally an open
submanifold of the linear manifold $\rrnp$. Moreover, it can be shown
that the map
\[
\psi: \O_n\times \STpn\to\STpn: (Q,X)\mapsto XQ
\]
is a free and proper Lie group action on the manifold
$\STpn$. Therefore, by the quotient manifold theorem (see,
e.g.,~\cite[Th.~9.16]{Lee2003}), the orbit space $\STpn/\O_p$ is a
quotient manifold. In other words, the set $\STpn/\O_p$ is turned into
a manifold by endowing it with the unique differentiable structure
that makes the quotient map $\pi$ a submersion. It
comes as a consequence that each equivalence class
$[X]=\pi^{-1}(\pi(X))$, $X\in\STpn$, is an embedded submanifold of
$\STpn$. We term \emph{vertical space} at $X\in\STpn$ the tangent
space $\calV_X$ to $[X]$ at $X$, i.e.,
\[
\calV_X = T_X[X] = X \ T_I \O_p = X \setskew(p).
\]
Observe that
$\im(\Pa_X)=\calV_X$. 

Next we define horizontal spaces $\calH_X$, which must satisfy the
condition that $\rrnp$ is the internal direct sum of the vertical and
horizontal spaces. We choose
\begin{equation}  \label{eq:H}
\calH_X = \im(\Ph_X) = \{XS + X_\perp K: S^T=S\}
\end{equation}
where $\Ph$ is as in~\eqref{eq:Ph} and $X_\perp\in\rr^{n\times(n-p)}$
denotes any orthonormal matrix such that $X^T X_\perp=0$. (It
would be sufficient to state that $X_\perp$ has full rank, but it does
not hurt to assume that it is orthonormal. We do not use $X_\perp$ in
the numerical algorithms.)  

As an aside, we point out that the horizontal space~\eqref{eq:H} is
the orthogonal complement of the vertical space with respect to the
noncanonical metric $\overline{g}$ defined by
\[
\overline{g}_X(Z_1,Z_2) = \trace(Z_1^T \Pp_X(Z_2) + Z_1^T X(X^TX)^{-2}
X^TZ_2).
\]
Indeed, we have $\{Z\in\rrnp: \overline{g}_X(W,Z)=0 \ \text{for all
  $W\in\calV_X$} \} = \{XS + X_\perp K: S^T=S\}$. The reason for not
choosing the canonical Riemannian metric on $\STpn$ is that it yields
a more intricate formula for the horizontal space and for the
projection onto it. 

\pacomm{(We don't need the following, because the Riemannian connection on
  $\STpn$ generated by $\overline{g}$ seems to be quite complicated,
  so we won't use it, and thus we won't get the Riemannian
  connection on the quotient.)  It can be shown, using the
  developments proposed in~\cite{AMS2008}, that there exists a
  Riemannian metric $g$ on the quotient $\STpn/\O_p$ that turns the
  natural projection $\pi:\STpn\to\STpn/\O_p$ into a Riemannian
  submersion. In the language of~\cite{AMS2008}, the quotient manifold
  $\STpn/\O_p$ has thus been endowed with a Riemannian quotient
  manifold structure.  }

Consider a function $F:\rrnp\to\rrnp$ that satisfies the symmetry
property~\eqref{eq:FXQ}. Define a horizontal vector field
$\lift{\xi}$ by 
\[
\lift[X]{\xi} := \Ph_X(F(X)).
\]
It can be checked that this horizontal vector field $\lift{\xi}$
satisfies $\D \pi(X)[\lift[X]{\xi}] = \D \pi(XQ)[\lift[XQ]{\xi}]$ for
all $Q\in \O_p$, and thus $\lift{\xi}$ is the horizontal lift of a
vector field $\xi$ on the quotient manifold $\STpn/\O_p$. In the
remainder of this section, we formulate a geometric Newton method for
finding a zero of the vector field $\xi$.

\pacomm{To show that $\lift{\xi}$ indeed defines a horizontal lift,
  show that $\lift[XQ]{\xi} = \lift[X]{\xi}Q$, and that therefore
  $\lift[XQ]{\xi}$ and $\lift[X]{\xi}$ project to the same tangent
  vector at $\pi(X)$.}

For finding a zero of a vector field $\xi$ on an abstract manifold
$\calM$ endowed with an \emph{affine connection} $\nabla$ and a
\emph{retraction} $R$, we consider the geometric Newton method in the
form proposed by Shub~\cite[\S 5]{ADM2002} (or see~\cite[\S
6.1]{AMS2008}). The method consists of iterating the mapping that
sends $x\in\calM$ to $x_+\in\calM$ obtained by solving
\begin{subequations}  \label{eq:N}
\begin{gather}
\J_\xi(x)[\eta_x] = -\xi_x, \quad \eta_x\in T_x\calM,  \label{eq:N-eq}
\\
x_+ = R_x(\eta_x).  \label{eq:N-up}
\end{gather}
\end{subequations}
Here $\J_\xi(x)$ denotes the Riemannian Jacobian
$\J_\xi(x):T_x\calM\to T_x\calM: \zeta_x \mapsto \J_\xi(x)[\zeta_x] =
\nabla_{\zeta_x}\xi$.

\patodo{Define tangent space?}

For the case where $\calM$ is the quotient $\STpn/\O_p$, we choose the
affine connection $\nabla$ defined by
\begin{equation}  \label{eq:nabla}
\lift[X]{(\nabla_{\eta_{\pi(X)}}\xi)} := \Ph_X \D \lift{\xi}(X)[\lift[X]{\eta}],
\end{equation}
where the overline denotes the horizontal lift. It can be shown
that the right-hand side is indeed a horizontal lift and that the definition
of $\nabla$ satisfies all the properties of an affine connection. With
this choice for $\nabla$, and with a simple choice for the retraction
$R$ (see~\cite[\S 4.1.2]{AMS2008} for the relevant theory), the
geometric Newton method on the quotient manifold $\STpn/\O_p$ becomes
the iteration that maps $\pi(X)$ to $\pi(X_+)$ by solving
\begin{subequations}  \label{eq:N-quot}
\begin{gather}  
\Ph_X \D \lift{\xi}(X)[\lift[X]{\eta}] = -\lift[X]{\xi}, \quad
\lift[X]{\eta}\in \calH_X
\label{eq:N-quot-eq}
\\
X_+ = X+\lift[X]{\eta},
\end{gather}
\end{subequations}
with $\Ph$ as in~\eqref{eq:Ph} and $\calH_X$ as in~\eqref{eq:H}. Note
that, in spite of possibly unfamiliar notation,~\eqref{eq:N-quot} only
involves basic calculus of functions between matrix spaces.

\pacomm{A key element in the proof that we have a horizontal lift
  in~\eqref{eq:nabla} 
  is the following development:
$\D \lift{\xi}(XQ)[ZQ] = \D (Y\mapsto \lift{\xi}(Y))(XQ) \circ
\D(X\mapsto XQ)(X)[Z] = \D(X\mapsto\lift{\xi}(XQ))(X)[Z] =
\D(X\mapsto\lift{\xi}Q)(X)[Z] = \D(X\mapsto\lift{\xi}(X))(X)[Z]Q$.
}

The local superlinear convergence result for the geometric Newton
method~\eqref{eq:N-quot} follows directly from the local convergence
result of the general geometric Newton method~\eqref{eq:N},
see~\cite[\S 6.3]{AMS2008}. It states that the iteration converges
locally quadratically to the nondegenerate zeros of $\xi$. Since we
have ``quotiented out'' the symmetry of $F$ by the action of $\O_p$ to
obtain $\xi$, it is now
reasonable to hope that the zeros of $\xi$ are nondegenerate.

Note that the proposed geometric Newton method differs from the
``canonical'' Riemannian Newton algorithm~\cite{Smi94} on the quotient $\STpn/\O_p$,
because the affine connection chosen is not the Riemannian connection
on $\STpn/\O_p$ endowed with the Riemannian metric inherited from the
canonical metric in $\STpn$. However, the property of local
superlinear convergence to the nondegenerate zeros still holds
(see~\cite{ADM2002} or~\cite{AMS2008}).

\patodo{(Define $\D$?)}

\section{A geometric Newton method for Oja's vector field}
\label{sec:N-Oja}

In this section, we apply the geometric Newton method on $\STpn/\O_p$,
given in~\eqref{eq:N-quot}, to the case where the tangent vector field
$\xi$ on $\STpn/\O_p$ is defined by the horizontal lift
\begin{equation}  \label{eq:xi-Oja}
\lift[X]{\xi} := \Ph_X(F(X)),
\end{equation}
with $\Ph$ as in~\eqref{eq:Ph} and $F$ as in~\eqref{eq:Oja-F}. 
\pacomm{(It is
readily checked that~\eqref{eq:xi-Oja} is a well-defined horizontal
lift.)} The resulting Newton iteration is formulated in
Algorithm~\ref{al:N-Oja}. Recall the definitions of
$\STpn$~\eqref{eq:STpn}, $\Ph$~\eqref{eq:Ph}, $\skew$~\eqref{eq:skew},
$\calH_X$~\eqref{eq:H}.

\begin{algorithm}
\caption{Geometric Newton for Oja's vector field}
\label{al:N-Oja}
\begin{algorithmic}[1]
\REQUIRE Symmetric positive-definite $n\times n$ matrix $A$;
  positive integer $p<n$.  
\INPUT Initial iterate $X_0\in\STpn$, i.e.,
  $X_0$ is a real $n\times p$ matrix with full rank.  
\OUTPUT Sequence of
  iterates $(X_k)\subset\STpn$.  
\FOR{$k=0,1,2,\ldots$} 
\STATE Solve
  the linear system of equations (we drop the subscript $k$ for convenience)
\begin{multline}  \label{eq:N-Oja}
\Ph_X( 
AZ-ZX^TAX-XZ^TAX-XX^TAZ - Z\skew((X^TX)^{-1}X^TAX) 
\\ -
X\skew(-(X^TX)^{-1}(X^TZ+Z^TX)(X^TX)^{-1}X^TAX+(X^TX)^{-1}(Z^TAX+X^TAZ))
) 
\\ = -\left(AX-XX^TAX-X\skew((X^TX)^{-1}X^TAX)\right)
\end{multline}
for the unknown $Z\in\calH_X$.
\STATE Set
\[
X_{k+1} = X_k + Z.
\]
\ENDFOR
\end{algorithmic}
\end{algorithm}

Observe that Algorithm~\ref{al:N-Oja} is stated as an iteration
  in the total space $\STpn$ of the quotient $\STpn/\O_p$. Formally,
  the sequence of iterates of the Newton method on $\STpn/\O_p$, for
  an initial point $\pi(X_0)\in\STpn/\O_p$, is given by
  $(\pi(X_k))_{k\in\nn}$, where $\pi$ is the quotient
  map~\eqref{eq:pi} and $(X_k)_{k\in\nn}$ is the sequence of iterates
  generated by Algorithm~\ref{al:N-Oja}.

We point out that~\eqref{eq:N-Oja} is merely a linear system of
equations. It can be solved by means of iterative solvers that can
handle linear systems in operator form. Moreover, these solvers can
be stopped early to avoid unnecessary computational effort when the
iterate $X_k$ is still far away from the solution. Guidelines for stopping
the linear system solver can be found, e.g., in~\cite{EW96}. In our
numerical experiments (Section~\ref{sec:exper}) we have used
{\sc Matlab}'s GMRES solver.

Algorithm~\ref{al:N-Oja} converges locally quadratically to the
nondegenerate zeros of $\xi$. We first characterize the zeros of
$\xi$, then we show that they are all nondegenerate under the
assumption~\eqref{eq:simple-eig}.

First note that $\xi_{\pi(X)}=0$ if and only if $\Ph_X(F(X))=0$, where
$X\in\STpn$. Under the assumption that $X\in\STpn$, the following
statements are equivalent:
\begin{gather*}
\Ph_X(F(X))=0,
\\
F(X) \in \im(\Pa_X),
\\
F(X) = X\Omega \ \text{for some $\Omega=-\Omega^T$},
\\
AX -XX^TAX = X\Omega \ \text{for some $\Omega=-\Omega^T$},
\\
A\,\col(X)\subseteq\col(X) \text{ and } (X^TX)^{-1}X^TAX - X^TAX \text{
  is skew-symmetric},
\\ 
A\,\col(X)\subseteq\col(X) \text{ and } (X^TX)^{-1}X^TAX +
X^TAX(X^TX)^{-1} = 2X^TAX.
\end{gather*}
We thus have an equation of the form $YB+BY=2B$, where $B:=X^TAX$ is
symmetric positive definite, hence its eigenvalues $\beta_i$,
$i=1,\ldots,p$, are all real and positive. The Sylvester operator
$Y\mapsto YB+BY$ is a linear operator whose eigenvalues are
$\beta_i+\beta_j$, $i=1,\ldots,p$,
$j=1,\ldots,p$~\cite[Ch.~VI]{Gan59}. All these eigenvalues are real
and positive, thus nonzero, hence the operator is nonsingular, from
which it follows that the equation $YB+BY=2B$ has one and only one
solution $Y$. It is readily checked that this unique solution is $Y=I$. We
have thus shown that $X^TX=I$. The result is summarized in the
following proposition.
\begin{prpstn}
  Let $\Ph$ be as in~\eqref{eq:Ph} and let $F$ be Oja's vector
  field~\eqref{eq:Oja-F}. Then $X\in\STpn$ is a zero of the projected
  Oja vector field $\Ph(F)$ if and only if $A\,\col(X)\subseteq\col(X)$
  and $X^TX=I$.
\end{prpstn}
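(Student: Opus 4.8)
The plan is to translate the condition $\xi_{\pi(X)}=0$ into a concrete matrix equation and solve it. Since $\pi$ is a submersion and $\lift[X]{\xi}=\Ph_X(F(X))$ is the horizontal lift, $\xi_{\pi(X)}=0$ holds if and only if $\Ph_X(F(X))=0$ for the representative $X\in\STpn$. First I would use the decomposition $Z=\Pp_X(Z)+\Ps_X(Z)+\Pa_X(Z)$ together with $\calH_X=\im(\Ph_X)=\im(\Pp_X)\oplus\im(\Ps_X)$ and $\calV_X=\im(\Pa_X)$ to note that $\Ph_X(F(X))=0$ is equivalent to $F(X)\in\im(\Pa_X)=X\setskew(p)$, i.e.\ $AX-XX^TAX=X\Omega$ for some skew-symmetric $\Omega$.

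Next I would extract two consequences from $AX-XX^TAX=X\Omega$. Multiplying on the left by $X^T$ (or simply observing that the right-hand side lies in $\col(X)$ and so does $XX^TAX$) gives $AX\in\col(X)$, hence $A\,\col(X)\subseteq\col(X)$; since $X$ has full rank and $A$ is positive definite this is in fact an equality. Solving for $\Omega$ yields $\Omega=(X^TX)^{-1}X^T(AX-XX^TAX)=(X^TX)^{-1}X^TAX-X^TAX$, so the condition becomes: $A\,\col(X)\subseteq\col(X)$ together with the requirement that $(X^TX)^{-1}X^TAX-X^TAX$ be skew-symmetric. Writing $Y:=(X^TX)^{-1}$ and $B:=X^TAX$ (symmetric positive definite), skew-symmetry of $YB-B$ together with symmetry of $B$ rearranges to the Sylvester-type equation $YB+BY=2B$.

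The key step is then to show this forces $Y=I$, equivalently $X^TX=I$. Here I would invoke the classical fact (e.g.\ \cite[Ch.~VI]{Gan59}) that the Sylvester operator $Y\mapsto YB+BY$ has eigenvalues $\beta_i+\beta_j$ where $\beta_1,\dots,\beta_p>0$ are the eigenvalues of $B$; these are all strictly positive, so the operator is invertible and $YB+BY=2B$ has a unique solution, which is visibly $Y=I$. Since $Y=(X^TX)^{-1}$ is symmetric positive definite whenever $X$ has full rank, this is a legitimate solution of the unconstrained linear equation, and uniqueness pins it down. Conversely, if $A\,\col(X)\subseteq\col(X)$ and $X^TX=I$, then $AX=X(X^TAX)$, so $F(X)=AX-XX^TAX=0$ and a fortiori $\Ph_X(F(X))=0$; thus the equivalence is established in both directions.

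I do not expect a serious obstacle; the only point requiring a little care is confirming that the Sylvester operator is nonsingular, which rests entirely on $B=X^TAX$ being positive definite — this is guaranteed by $A\succ0$ and $X$ full rank. A secondary point worth spelling out is that the passage from ``$\Omega$ skew-symmetric'' to the symmetric equation $YB+BY=2B$ uses $B^T=B$, so that $\skew(YB-B)=0$ is equivalent to $YB-B=-(YB-B)^T=-(BY-B)$, i.e.\ $YB+BY=2B$. Everything else is routine linear algebra, so the proof is short.
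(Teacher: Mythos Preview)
Your proposal is correct and follows essentially the same route as the paper: reduce $\Ph_X(F(X))=0$ to $F(X)=X\Omega$ with $\Omega$ skew, extract the invariance condition $A\,\col(X)\subseteq\col(X)$, rewrite the skewness of $(X^TX)^{-1}X^TAX - X^TAX$ as the Sylvester equation $YB+BY=2B$ with $B=X^TAX\succ 0$, and invoke nonsingularity of the Sylvester operator (eigenvalues $\beta_i+\beta_j>0$) to conclude $Y=(X^TX)^{-1}=I$. Your explicit treatment of the converse and of the passage from skewness to the symmetric Sylvester form is a nice addition, but there is no substantive difference from the paper's argument.
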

In other words, the full-rank zeros of the projected Oja vector field
$\Ph(F)$ are the full-rank zeros of Oja's vector field $F$. This means
that we do not lose information by choosing to search for the zeros of
$\xi$~\eqref{eq:xi-Oja} instead of $F$~\eqref{eq:Oja-F}.

It remains to show that the zeros of $\xi$ are nondegenerate. Let
$\pi(X_*)$ be a zero of $\xi$, which means that $AX_* = X_*X_*^TAX_*$
and $X_*^TX_*=I$. The task is to show that the Jacobian operator
$\J_\xi(\pi(X_*))$, or equivalently its lifted counterpart
\begin{equation}  \label{eq:N-op}
\lift{\J_\xi}(X_*): \im(\Ph)\to\im(\Ph): Z \mapsto \Ph_{X_*} \left(
\D(\Ph(F))(X_*)[Z] \right),
\end{equation}
is nonsingular. To this end, consider the operator
\begin{equation}  \label{eq:J}
\calJ:\rrnp\to\rrnp: Z\mapsto \D(\Ph(F))(X_*)[Z].
\end{equation}
Note that $\lift{\J_\xi}(X_*)$ is the restriction of $\calJ$ to
$\im(\Ph)$. Consider the decomposition
$\rrnp=\im(\Pp)\oplus\im(\Ps)\oplus\im(\Pa)$ and recall that $\im(\Ph)
= \im(\Pp)\oplus\im(\Ps)$. We show that the corresponding block
decomposition of $\calJ$ is as follows:
\[
\begin{bmatrix}
* & 0 &  0
\\ ? & * & 0
\\ ? & ? & 0
\end{bmatrix},
\]
where ``*'' denotes nonsingular operators. It then directly follows
that the upper two-by-two block of $\calJ$, which corresponds to
$\lift{\J_\xi}(X_*)$, is nonsingular.

We show that the 11 block (i.e., the ``pp'' block) is
nonsingular. This block is the operator from $\im(\Pp_{X_*})$ to
$\im(\Pp_{X_*})$ given by 
\[
Z \ \mapsto\ \Pp_{X_*} \D(\Ph(F))(X_*)[Z] = \Pp_{X_*}AZ-\Pp_{X_*} Z
X_*^TAX_*.
\]
(In obtaining this result, we have used the relations $X_*^TX=I$,
$\skew(X_*^TAX_*)=0$, $\Pp_{X_*}X_*=0$.)
In view of the hypothesis that the eigenvalues of $A$ are all
simple, this operator is known to be nonsingular; see,
e.g.,~\cite{LE2002,ASVM2004-01}.

We show that the 22 block (i.e., the ``ss'' block) is
nonsingular. This block is the operator from $\im(\Ps_{X_*})$ to
$\im(\Ps_{X_*})$ 
\[
X_*S
\quad \mapsto \quad \Ps_{X_*}\left(
\D(\Ph(F))(X_*)[X_*S] \right) = -X_*(SX_*^TAX_*+X_*^TAX_*S).
\]
(We have used the relation $AX_*=X_*X_*^TAX_*$ to obtain this
expression.) In view of the previous discussion on the Sylvester
operator, this operator is nonsingular.

This completes the proof that the zeros of $\xi$ are
nondegenerate. Consequently, for all $X_0$ sufficiently close to some
$\calS_i$~\eqref{eq:Si}, the sequence $(X_k)$ generated by
Algorithm~\ref{al:N-Oja} is such that $X_k\O_p$ converges quadratically
to $\calS_i$. Recall that $\calS_i$ is the set of all orthonormal
matrices whose column space is the $i$th invariant subspace of $A$.



\section{Numerical experiments}
\label{sec:exper}

In this section, we report on numerical experiments for both the
plain Newton and the geometric Newton method,
derived in Section 2 and Section 4, respectively. 
The experiments were run using {\sc Matlab}. The
  machine epsilon is approximately $2\cdot 10^{-16}$.

As mentioned in Section 1, 
the plain Newton method performs poorly due to the fact that the
zeros of the cost function $F$ are not isolated. To illustrate
this, we consider a symmetric positive definite matrix
$A\in\rr^{6\times 6}$ with uniformly distributed eigenvalues in the interval $[0,1]$ and 100 different initial iterates
$X_0\in \rr^{6\times 3}.$ Each of the initial iterates is computed as
$$ X_0 = X_\ast + 10^{-6} E\;,$$
where $X_\ast$ is such that $F(X_\ast)=0$ and $E$ is a $6\times 3$
matrix with random entries, chosen from a normal distribution with
zero mean and unit variance. The simulation is stopped after
50 iterations. One representative example is given in Figure~\ref{fig:plainN}.
\begin{figure}[h]
\subfigure[F]{\includegraphics[width=0.5\textwidth]{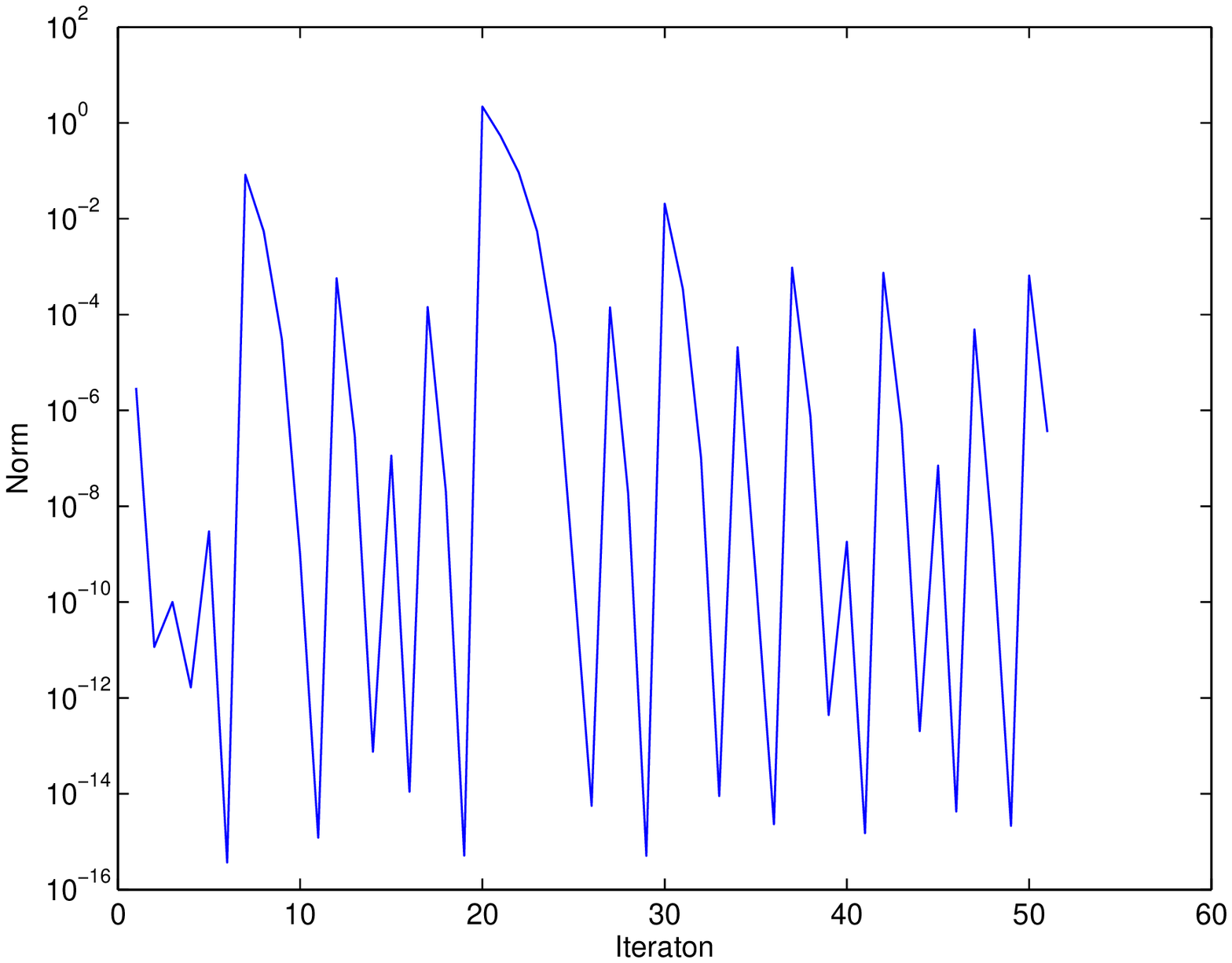}}
\subfigure[$K,\,XS,\,X\Omega$]{\includegraphics[width=0.5\textwidth]{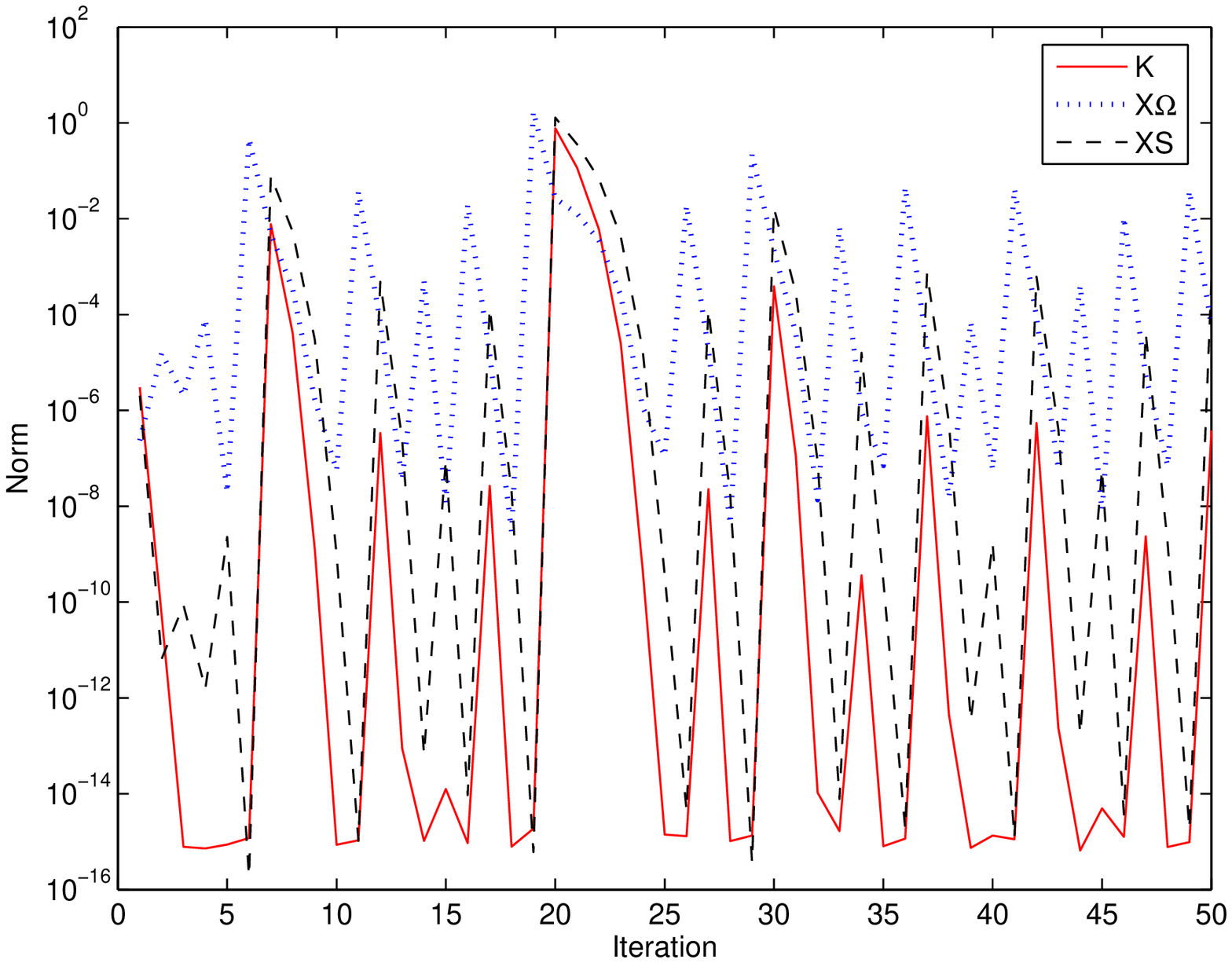}}
\caption{Plain Newton method}
\label{fig:plainN}
\end{figure}
In Figure~\ref{fig:plainN} (a), the norm of $F(X)$ is given for each iteration step. Close to the solution,
the system matrix gets singular and the algorithm deviates from the optimal point.
In Figure~\ref{fig:plainN} (b), we present the evolution of the norms of the three components
$K, X\Omega,$ and $XS$ of the update vector $Z,$
$$Z = X_\perp K + X \Omega + X S\;,$$
where $X_\perp$ is the orthogonal complement of $X,$ $\Omega$ is a
skew-symmetric matrix and $S$ is a symmetric matrix. We see that,
even when the $K$ and $XS$ component are very small, $X\Omega$ is quite large. 
This concords with the fact that the kernel of the Hessian at a stationary point $X$ is $\{X\Omega: \Omega^T=-\Omega\}$.

Next, we study the geometric Newton method, derived in Section 4. 
Again, we consider $n=6,\; p=3$ and 
a symmetric positive definite matrix $A\in\rr^{6\times 6}$ 
with uniformly distributed eigenvalues in the interval $[0,1]$. 
We perform $10^4$ experiments with a single matrix $A$ but with
different initial iterates $X_0\in\rr_\ast^{6\times 3}$ with random
entries, chosen from a normal distribution with zero mean and unit
variance. In Figure~\ref{fig:geomN}, we show the number of runs that
converged to each of the eigenspaces of $A$.
\begin{figure}[h]
\subfigure[Linear plot]{\includegraphics[width=0.5\textwidth]{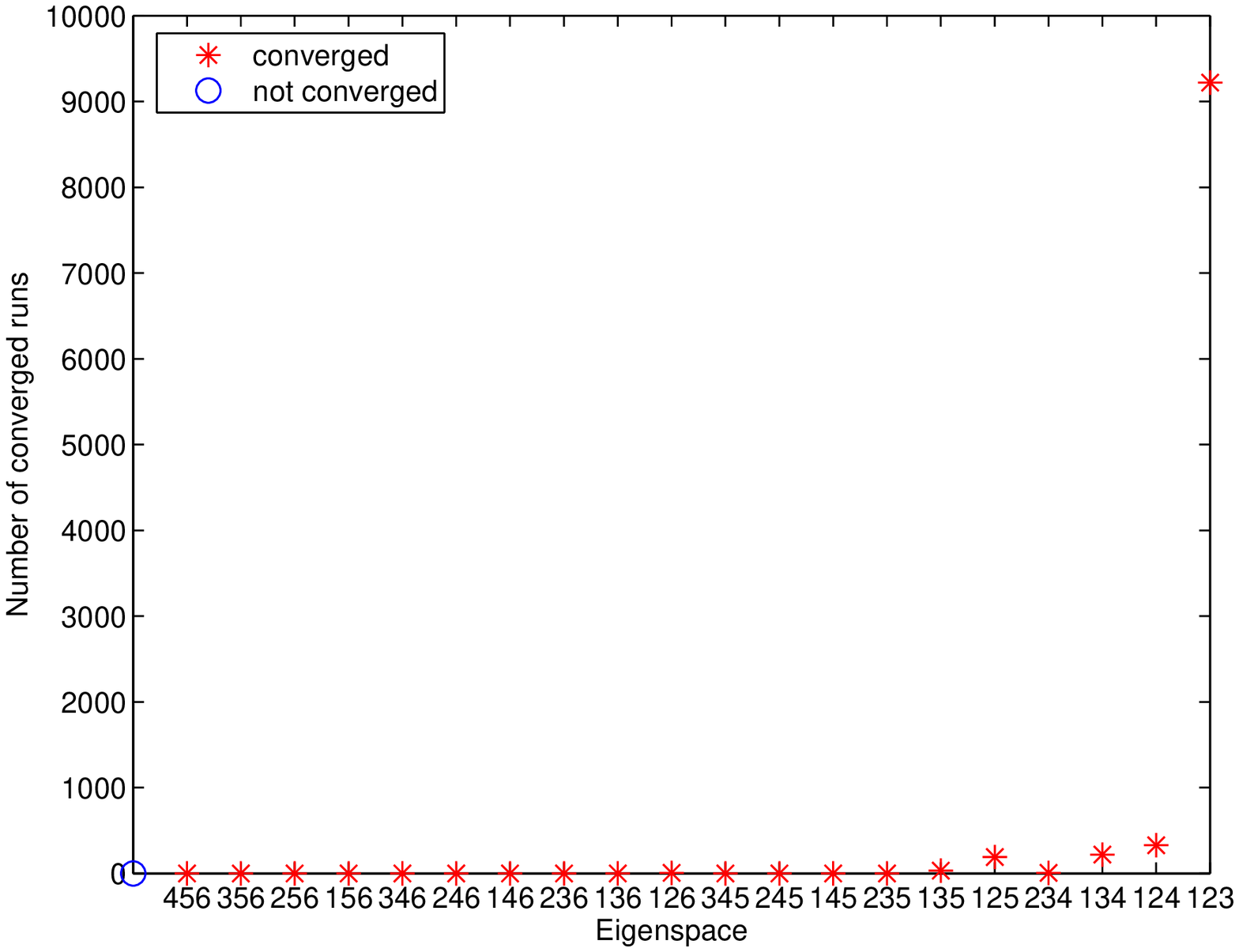}}
\subfigure[Logarithmic scale]{\includegraphics[width=0.5\textwidth]{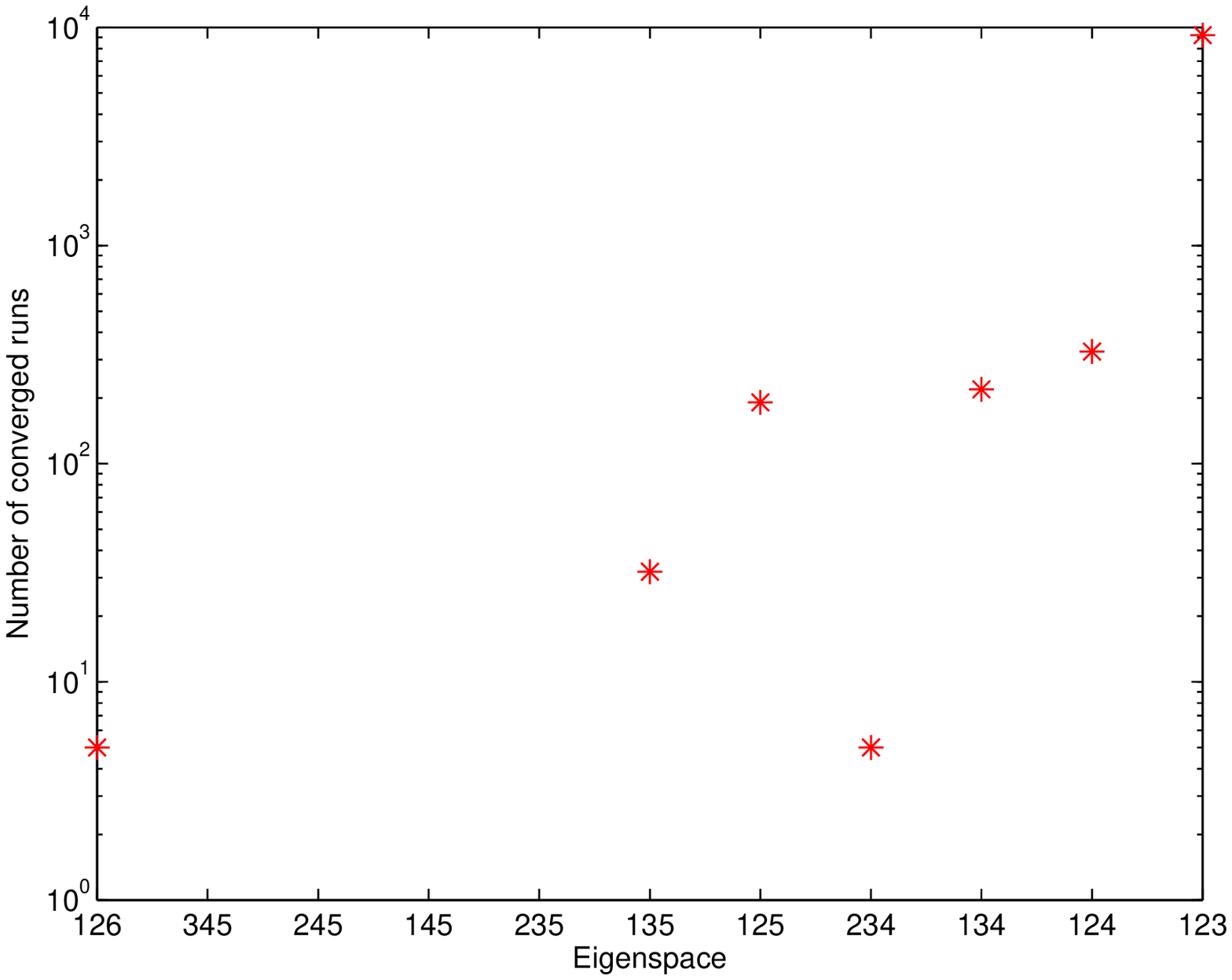}}
\caption{Geometric Newton method}
\label{fig:geomN}
\end{figure}
The dominant eigenspace is marked by ``123''. In general, ``$ijk$''
stands for the eigenspace spanned by the $i^{\mbox{\begin{footnotesize}th\end{footnotesize}}}$,
$j^{\mbox{\begin{footnotesize}th\end{footnotesize}}}$, and $k^{\mbox{\begin{footnotesize}th\end{footnotesize}}}$
eigenvectors \footnote{For convenience, we consider an eigenvalue decomposition, where
the eigenvalues of $A$ are put in descending order.}.
Let $W$ be the matrix of all eigenvectors.
We declare that the algorithm has converged to eigenspace ``$ijk$''
when the norms of the $i^{\mbox{\begin{footnotesize}th\end{footnotesize}}}$,
$j^{\mbox{\begin{footnotesize}th\end{footnotesize}}}$, and
$k^{\mbox{\begin{footnotesize}th\end{footnotesize}}}$ columns of the matrix $X^TW$
are all greater than $10^{-10}$ after 50 iterations and the norms of the rest of the columns are smaller than $10^{-10}$.
It appears that the basin of attraction of the dominant eigenspace
is the largest. In our experiment, all the runs have converged to
one of the 20 possible eigenspaces. In general, there may be cases
where the algorithm does not converge to any of the eigenspaces.
This may occur when the initial iterate $X_0$ is very close to the
boundary of one of the basins of attraction. However, these cases
are rare. Finally, the superlinear convergence rate of the
algorithm is illustrated in Figure~\ref{fig:superlin}.
\begin{figure}[h]
\begin{center}
{\includegraphics[width=0.5\textwidth]{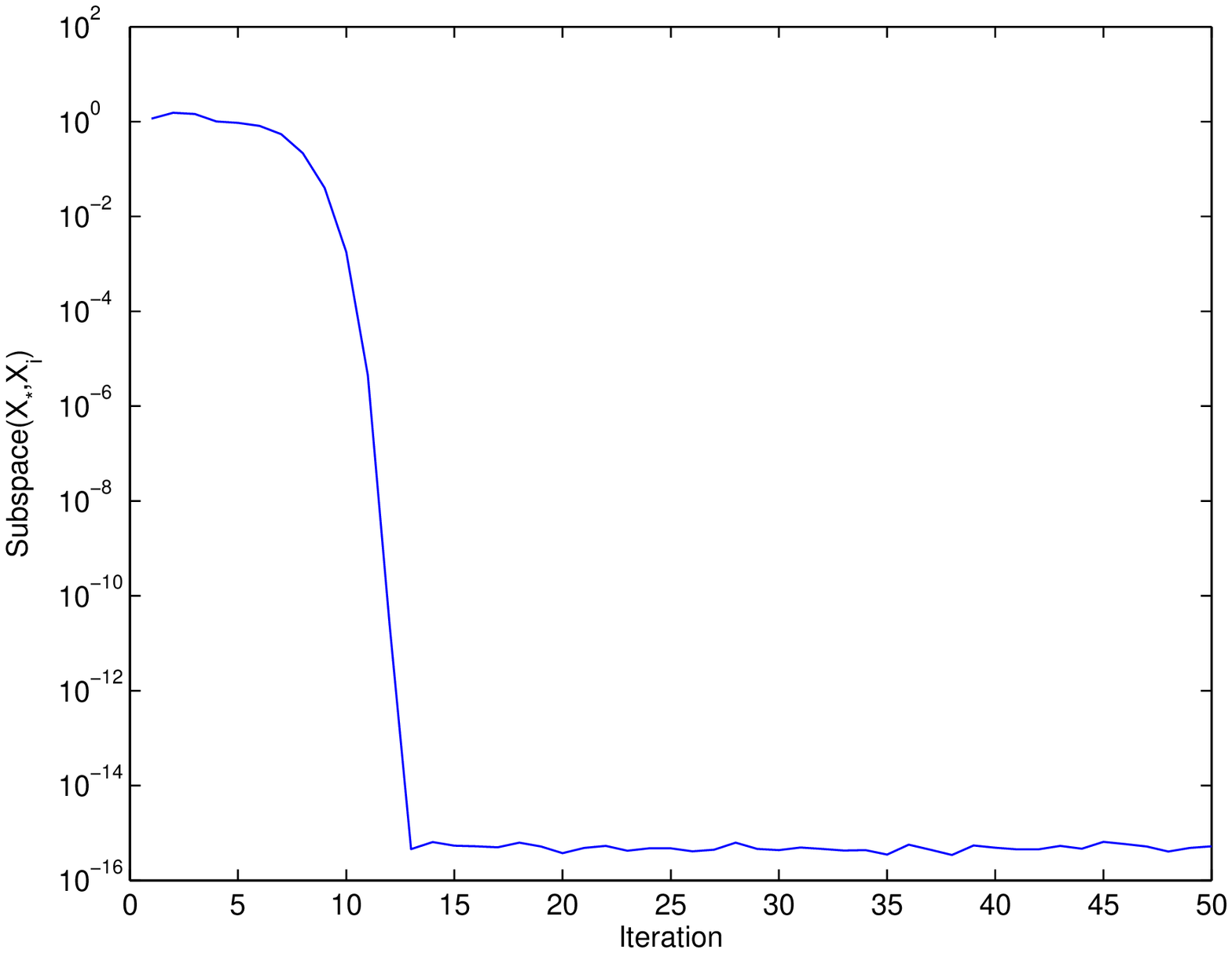}}
\caption{Superlinear convergence of the geometric Newton method}
\label{fig:superlin}
\end{center}
\end{figure}




\section{Conclusion}
\label{sec:conclusion}

We have investigated the use of Newton's method to compute
superlinearly the zeros of Oja's vector field~\eqref{eq:Oja-F}. Due to
a symmetry in the vector field by the action of the orthogonal group,
its zeros are never isolated, which causes the plain Newton method to
behave poorly. We have proposed a remedy that consists in
``quotienting out'' the symmetry. This led to the formulation of a
geometric Newton algorithm that seeks the zeros of a projection of
Oja's vector field. We have shown that the zeros of the projected
vector field are the same as the zeros of the original vector
field. Moreover, these zeros are nondegenerate. This means that by
quotienting out the action of the orthogonal group, we have removed
just enough symmetry to make the zeros nondegenerate. In view of the
nondegeneracy property, it follows directly from the convergence
theory of the abstract geometric Newton method that the resulting
algorithm converges locally superlinearly to the zeros of Oja's vector
field.

Invariant subspace computation has been and still is a very active
area of research. As a method for invariant subspace computation, it
is doubtful that the proposed algorithm can outperform the
state-of-the-art methods. In particular, the Grassmann-based
approach~\cite{EAS98,LE2002,AMS2004-01}, that can be thought of as
quotienting out the action of the whole general linear group instead of
the orthogonal group, leads to a Newton equation that lives in a
smaller subspace of $\rrnp$ and that can be solved in fewer
flops. When $n\gg p$, however, the number of operations to compute the
iterates is of the same order. Moreover, the Grassmann-based algorithm
does not possess the remarkable feature of naturally converging towards
orthonormal matrices, i.e., without having to resort to
orthogonalization steps such as Gram-Schmidt.

The problem of computing the zeros of Oja's vector field was also an
occasion for introducing the quotient manifold $\STpn/\O_p$, that seems
to have received little attention in the literature, in contrast to
the more famous Grassmann and Stiefel manifolds. In later work, we
will further analyze the geometry of this quotient manifold, which was
just touched upon in this paper, and we will show how it can be used
in the context of low-rank approximation problems.

\section*{Acknowledgements}

Fruitful discussions with Michel Journ\'ee, Rodolphe Sepulchre,
and Bart Vandereycken are gratefully acknowledged. 

\bibliographystyle{amsalpha_verycompact}
\bibliography{pabib}

\newcommand{\etalchar}[1]{$^{#1}$}
\def\polhk#1{\setbox0=\hbox{#1}{\ooalign{\hidewidth
  \lower1.5ex\hbox{`}\hidewidth\crcr\unhbox0}}}
\providecommand{\bysame}{\leavevmode\hbox to3em{\hrulefill}\thinspace}
\providecommand{\MR}{\relax\ifhmode\unskip\space\fi MR }
\providecommand{\MRhref}[2]{%
  \href{http://www.ams.org/mathscinet-getitem?mr=#1}{#2}
}
\providecommand{\href}[2]{#2}
\begin{thebibliography}{ADM{\etalchar{+}}02}
\setlength{\itemsep}{-1pt}
{\footnotesize

\bibitem[ADM{\etalchar{+}}02]{ADM2002}
Roy~L. Adler, Jean-Pierre Dedieu, Joseph~Y. Margulies, Marco Martens, and Mike
  Shub, \emph{{N}ewton's method on {R}iemannian manifolds and a geometric model
  for the human spine}, IMA J. Numer. Anal. \textbf{22} (2002), no.~3,
  359--390.

\bibitem[AMS04]{AMS2004-01}
P.-A. Absil, R.~Mahony, and R.~Sepulchre, \emph{Riemannian geometry of
  {G}rassmann manifolds with a view on algorithmic computation}, Acta Appl.
  Math. \textbf{80} (2004), no.~2, 199--220.

\bibitem[AMS08]{AMS2008}
P.-A. Absil, R.~Mahony, and R.~Sepulchre, \emph{Optimization algorithms on
  matrix manifolds}, Princeton University Press, Princeton, NJ, January 2008.

\bibitem[ASM08]{AbsMahSep2007-32}
P.-A. Absil, R.~Sepulchre, and R.~Mahony, \emph{Continuous-time subspace flows
  related to the symmetric eigenproblem}, Tech. Report UCL-INMA-2007.032,
  D\'epartement d'ing\'enierie math\'ematique, Universit\'e catholique de
  Louvain, 2008, accepted for publication in Pac. J. Optim.

\bibitem[ASVM04]{ASVM2004-01}
P.-A. Absil, R.~Sepulchre, P.~{Van~Dooren}, and R.~Mahony, \emph{Cubically
  convergent iterations for invariant subspace computation}, SIAM J. Matrix
  Anal. Appl. \textbf{26} (2004), no.~1, 70--96.

\bibitem[DMV99]{DMV99}
Jeroen Dehaene, Marc Moonen, and Joos Vandewalle, \emph{Analysis of a class of
  continuous-time algorithms for principal component analysis and subspace
  tracking}, IEEE Trans. Circuits Systems I Fund. Theory Appl. \textbf{46}
  (1999), no.~3, 364--372.

\bibitem[DS83]{DS83}
John~E. Dennis, Jr. and Robert~B. Schnabel, \emph{Numerical methods for
  unconstrained optimization and nonlinear equations}, Prentice Hall Series in
  Computational Mathematics, Prentice Hall Inc., Englewood Cliffs, NJ, 1983.

\bibitem[EAS98]{EAS98}
Alan Edelman, Tom{\'a}s~A. Arias, and Steven~T. Smith, \emph{The geometry of
  algorithms with orthogonality constraints}, SIAM J. Matrix Anal. Appl.
  \textbf{20} (1998), no.~2, 303--353.

\bibitem[EW96]{EW96}
S.~Eisenstat and H.~Walker, \emph{Choosing the forcing terms in an inexact
  {N}ewton method}, SIAM J. Sci. Comput. \textbf{17} (1996), no.~1, 16--32.

\bibitem[FK05]{FowKel2005}
K.~R. Fowler and C.~T. Kelley, \emph{Pseudo-transient continuation for
  nonsmooth nonlinear equations}, SIAM J. Numer. Anal. \textbf{43} (2005),
  no.~4, 1385--1406 (electronic).

\bibitem[Gan59]{Gan59}
F.~R. Gantmacher, \emph{The theory of matrices {I}, {II}}, Chelsea, New-York,
  1959.

\bibitem[Hig99]{Hig1999}
Desmond~J. Higham, \emph{Trust region algorithms and timestep selection}, SIAM
  J. Numer. Anal. \textbf{37} (1999), no.~1, 194--210 (electronic).

\bibitem[JO06]{JanOga2006}
Marko~V. Jankovic and Hidemitsu Ogawa, \emph{Modulated {Hebb}--{Oja} learning
  rule---a method for principal subspace analysis}, IEEE Trans. Neural Networks
  \textbf{17} (2006), no.~2, 345--356.

\bibitem[KN63]{KN63}
Shoshichi Kobayashi and Katsumi Nomizu, \emph{Foundations of differential
  geometry}, Interscience Publishers, a division of John Wiley \& Sons, New
  York-London, 1963, Volumes 1 and 2.

\bibitem[KQL{\etalchar{+}}06]{KelQiLia+2006}
C.~T. Kelley, L.~Qi, L-Z. Liao, J.~P. Reese, and C.~Winton,
  \emph{Pseudo-transient continuation and optimization},
  http://www4.ncsu.edu/$\sim$ctk/pubs.html, 2006.

\bibitem[LE02]{LE2002}
Eva Lundstr{\"o}m and Lars Eld{\'e}n, \emph{Adaptive eigenvalue computations
  using {N}ewton's method on the {G}rassmann manifold}, SIAM J. Matrix Anal.
  Appl. \textbf{23} (2001/02), no.~3, 819--839.

\bibitem[Lee03]{Lee2003}
John~M. Lee, \emph{Introduction to smooth manifolds}, Graduate Texts in
  Mathematics, vol. 218, Springer-Verlag, New York, 2003.

\bibitem[LKLT06]{LuoKelLia+2006}
X.-L. Luo, C.~T. Kelley, L.-Z. Liao, and H.-W. Tam, \emph{Combining trust
  region techniques and {Rosenbrock} methods for gradient systems},
  http://www4.ncsu.edu/$\sim$ctk/pubs.html, 2006.

\bibitem[MHM05]{ManHelMar2005}
Jonathan~H. Manton, Uwe Helmke, and Iven M.~Y. Mareels, \emph{A dual purpose
  principal and minor component flow}, Systems Control Lett. \textbf{54}
  (2005), no.~8, 759--769.

\bibitem[Oja82]{Oja82}
E.~Oja, \emph{A simplified neuron model as a principal component analyzer}, J.
  Math. Biol. \textbf{15} (1982), 267--273.

\bibitem[Oja89]{Oja89}
Erkki Oja, \emph{Neural networks, principal components, and subspaces}, Int. J.
  Neural Syst. \textbf{1} (1989), 61--68.

\bibitem[O'N83]{ONe1983}
Barrett O'Neill, \emph{Semi-{R}iemannian geometry}, Pure and Applied
  Mathematics, vol. 103, Academic Press Inc. [Harcourt Brace Jovanovich
  Publishers], New York, 1983.

\bibitem[OR70]{OrtRhe1970}
J.~M. Ortega and W.~C. Rheinboldt, \emph{Iterative solution of nonlinear
  equations in several variables}, Academic Press, New York, 1970.

\bibitem[Smi94]{Smi94}
Steven~T. Smith, \emph{Optimization techniques on {R}iemannian manifolds},
  Hamiltonian and gradient flows, algorithms and control, Fields Inst. Commun.,
  vol.~3, Amer. Math. Soc., Providence, RI, 1994, pp.~113--136.

\bibitem[Yan98]{Yan1998}
Wei-Yong Yan, \emph{Stability and convergence of principal component learning
  algorithms}, SIAM J. Matrix Anal. Appl. \textbf{19} (1998), no.~4, 933--955
  (electronic).

\bibitem[YHM94]{YHM94}
Wei-Yong Yan, Uwe Helmke, and John~B. Moore, \emph{Global analysis of {Oja}'s
  flow for neural networks}, IEEE Trans. on Neural Networks \textbf{5} (1994),
  no.~5, 674--683.

}
\end{thebibliography}

\end{document}